\documentclass[preprint]{article}
\usepackage{amsfonts}
\usepackage{amssymb}
\usepackage{amsthm}
\usepackage{amsmath}
\usepackage{centernot}
\usepackage{tikz}
\usetikzlibrary{matrix}
\newtheorem{thm}{Theorem}
\newtheorem{lem}{Lemma}
\newtheorem{defin}{Definition}

\usepackage{hyperref}
\usepackage{cite}

\title{One-sided Duo rings are McCoy}
\author{Michael Cary\footnote{Department of Economics and Department of Mathematics, West Virginia University, Morgantown, WV 26506, email: macary@mix.wvu.edu}}

\begin{document}
\maketitle
\begin{abstract}
In this paper we prove that one-sided Duo rings are (two-sided) McCoy.  By doing so, we are then able to explicitly describe some of these ring element annihilators of polynomials in McCoy rings.  We conclude the paper by showing the place of these results in the literature by way of an extension of a convenient diagram from \cite{Camillo1}.
\end{abstract}

Keywords: McCoy ring; Duo ring; annihilation

MSC[2010]: 16-U99; 16-W25; 16-K99

\section{Introduction}
The purpose of this paper is to extend results on the relationships between McCoy rings and other ring-theoretic structures, particularly by proving that one-sided Duo rings are (two-sided) McCoy, as well as to describe some properties of ring element annihilators of polynomials in the polynomial ring over the one-sided Duo ring.

McCoy rings are a well-studied ring-theoretic structure, with results dating back to the seminal work by N.H. McCoy which shows that polynomials which annihilate one another over a commutative ring each admit an annihilator in the base ring in, e.g. \cite{McCoy1} and \cite{McCoy2}.  Since then, many authors have studied this and other highly related properties, for example \cite{Camillo1,Camillo2,Hong,Hirano2,Hirano1,KimB,KimN,Marks1,Nielsen} and \cite{Shin}, resulting in a well articulated analysis of the annihilation of polynomials and of zero divisors in various specific (and sometimes more general) contexts of ring theory and ring-theoretic structures.  Duo rings have a similarly thorough and longevous history, for example \cite{Habeb}, \cite{Marks2}, and \cite{Ziembowski}.  We extend some of these results in this paper from a structural standpoint by studying relationships between Duo rings and McCoy rings.  Other ring-theoretic structures highly relevant to this work, structures such as symmetric rings, semi-commutative rings, and 2-primal rings, have a rich history on their relationships between McCoy rings, e.g. \cite{Marks1,Marks2,Nielsen} and \cite{Shin}.

We first show in Section \ref{sec2} that one-sided Duo rings are necessarily McCoy.  In \cite{Camillo1} it was proven that Right Duo rings are necessarily Right McCoy, but the opposite side relationship remains unproven, hence we begin this paper with a proof that Left Duo rings are necessarily Right McCoy.  Before proving this, we revisit an important lemma from that paper which proves quite useful in proving that Left Duo rings indeed are Left McCoy.  This fact, that Left Duo rings are Left McCoy, is an incredibly important result in proving that Left Duo rings are Right McCoy, and consequentially that one-sided Duo rings are McCoy.

In Section \ref{sec3} we describe some of these ring element annihilators of polynomials in polynomial rings over one-sided Duo base rings.  Finally, in our conclusory section, we demonstrate where this work fits into the existing literature by extending a very nice diagram from \cite{Camillo1}.  We present the standing version of the diagram here to illustrate what is known of the relationships between McCoy rings and other ring theoretic structures.

\begin{center}
\begin{tikzpicture}
\matrix (m) [matrix of math nodes, row sep=1.6em, column sep=2.6em, minimum width=2em]
{
comm. & Duo & s.c. & 2-primal\\
symm. & rev. & & Abelian & D. Finite\\
red. & Arm. & McCoy & Right McCoy\\
	& & lin. arm. & lin. McCoy & right lin. McCoy\\};
\path[-stealth]
(m-1-1) edge [double] (m-1-2)
(m-1-1) edge [double] (m-2-1)
(m-1-2) edge [double] (m-1-3)
(m-1-2) edge [double] (m-3-3)
(m-1-4) edge [double] (m-2-5)
(m-2-1) edge [double] (m-2-2)
(m-2-2) edge [double] (m-1-3)
(m-2-2) edge [double] (m-3-3)
(m-1-3) edge [double] (m-2-4)
(m-1-3) edge [double] (m-4-4)
(m-1-3) edge [double] (m-1-4)
(m-2-4) edge [double] (m-2-5)
(m-3-1) edge [double] (m-2-1)
(m-3-1) edge [double] (m-3-2)
(m-3-2) edge [double] (m-3-3)
(m-3-2) edge [double] (m-4-3)
(m-3-3) edge [double] (m-3-4)
(m-3-3) edge [double] (m-4-4)
(m-3-4) edge [double] (m-4-5)
(m-4-3) edge [double] (m-2-4)
(m-4-3) edge [double] (m-4-4)
(m-4-4) edge [double] (m-4-5)
(m-4-5) edge [double] (m-2-5);
\end{tikzpicture}
\end{center}

\section{One-sided Duo rings are necessarily McCoy}\label{sec2}
It is known from \cite{Camillo1} that Left (resp. Right) Duo rings are Left (Right) McCoy.  However, it is unknown whether or not Left Duo rings are Right McCoy.  In fact, this was a question posed by the authors in \cite{Camillo1}.  In this section we prove that Left Duo rings indeed are necessarily Right McCoy and thereby prove that one-sided Duo rings are McCoy.  For clarity and convenience, we first state some definitions.

\begin{defin}\label{defin1}
A ring, \emph{R}, is called Left Duo if every left ideal $\emph{I} \subset \emph{R}$ is a two-sided ideal.  This implies that for all $r\in\emph{R}$ and $x\in\emph{I}$ we have that $rx\in\emph{I}$ and $xr\in\emph{I}$.  Moreover, for some $r\in\emph{R}$ and $x\in\emph{I}$  we have that $rx=x'r$ for some $x'\in\emph{I}$. 
\end{defin}
\begin{defin}\label{defin2}
A ring is simply said to be Duo if it is both Left and Right Duo.
\end{defin}
\begin{defin}\label{defin3}
A ring, \emph{R}, is said to be Right McCoy if for $f(x),g(x)\in\emph{R}[x]\setminus\{0\}$ such that $f(x)g(x)=0$ implies that there exists an $r\in\emph{R}$ such that $f(x)r=0$.
\end{defin}

In order to prove this implication, that Left Duo rings are necessarily Right McCoy, we first state two lemmas.  For the first lemma, we revisit a lemma by Nielsen from \cite{Camillo1} and \cite{Nielsen} which will serve as the basis for describing ring element right annihilators of the polynomial $f(x)$ in McCoy rings which are also one-sided Duo rings.  It gives an explicit description of two ring elements which serve as left annihilators of the polynomial $g(x)$ in semi-commutative rings (note that one-sided Duo does indeed imply semi-commutative).  This lemma can also be found in \cite{Camillo1} as Lemma $5.4$ and in \cite{Nielsen} as Lemma $1$.  The second lemma can be found in \cite{Camillo1} as Theorem $8.2$.  It shows that Right Duo rings are necessarily Right McCoy, a fact which, when complemented by a proof that Left Duo rings are Right McCoy, is necessary in proving that one-sided Duo rings are McCoy.    

\begin{lem}\label{lem1}
Let \emph{R} be a semi-commutative ring, let $m,n\in\mathbb{N}$, and let $f(x)=\sum\limits_{i=0}^m a_{i}x^i\in\emph{R}[x]$ and $g(x)=\sum\limits_{i=0}^n b_{i}x^i\in\emph{R}[x]\setminus\{0\}$.  If there exists $g(x)\in\emph{R}[x]$ with $f(x)g(x)=0$ then $a_{0}^{n+1}g(x)=0$ and $a_{m}^{n+1}g(x)=0$.
\end{lem}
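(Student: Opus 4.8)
The plan is to argue coefficient-by-coefficient in the style of McCoy's original argument, with the semi-commutativity hypothesis doing the work of ``absorbing'' the intermediate coefficients of $f(x)$. Writing $f(x)g(x)=0$ out, we obtain the relations $\sum_{i+j=k} a_i b_j = 0$ for each $k=0,1,\dots,m+n$. For the first conclusion it suffices to prove the sharper statement that $a_0^{\,j+1} b_j = 0$ for every $j=0,1,\dots,n$: indeed, this gives $a_0^{\,n+1} b_j = a_0^{\,n-j}\bigl(a_0^{\,j+1} b_j\bigr)=0$ for all $j$, hence $a_0^{\,n+1} g(x)=0$.

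I would prove the sharper statement by induction on $j$. The base case $j=0$ is just the degree-zero relation $a_0 b_0 = 0$. For the inductive step, start from the degree-$j$ relation $a_0 b_j + a_1 b_{j-1} + \cdots + a_j b_0 = 0$ and multiply it on the left by $a_0^{\,j}$. Each summand other than the leading one has the form $a_0^{\,j} a_i b_{j-i}$ with $1\le i\le j$; by the inductive hypothesis $a_0^{\,j-i+1} b_{j-i}=0$, and because $R$ is semi-commutative this product stays zero after inserting the ring element $a_i$, that is, $a_0^{\,j-i+1} a_i b_{j-i}=0$. Splitting $a_0^{\,j} = a_0^{\,i-1}\cdot a_0^{\,j-i+1}$ then gives $a_0^{\,j} a_i b_{j-i} = a_0^{\,i-1}\bigl(a_0^{\,j-i+1} a_i b_{j-i}\bigr)=0$. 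So every summand except $a_0^{\,j+1}b_j$ vanishes, forcing $a_0^{\,j+1}b_j=0$ and completing the induction.

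The second conclusion, $a_m^{\,n+1} g(x)=0$, follows from the mirror-image argument run from the top-degree coefficients. The degree-$(m+n)$ relation gives $a_m b_n=0$, and inducting on $k$ from the degree-$(m+n-k)$ relation $a_m b_{n-k} + a_{m-1} b_{n-k+1} + \cdots = 0$ (after multiplying on the left by $a_m^{\,k}$) yields $a_m^{\,k+1} b_{n-k}=0$ for $k=0,1,\dots,n$, where again semi-commutativity is used to kill each term $a_m^{\,k} a_{m-t} b_{n-k+t}$ with $t\ge 1$ via the relation $a_m^{\,k-t+1} b_{n-k+t}=0$ coming from the inductive hypothesis. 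One then gets $a_m^{\,n+1} b_{n-k} = a_m^{\,n-k}\bigl(a_m^{\,k+1} b_{n-k}\bigr)=0$ for every $k$, i.e.\ $a_m^{\,n+1} g(x)=0$.

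I do not anticipate a genuine obstacle here: the content is really a careful double induction. The one place demanding attention is the exponent bookkeeping when factoring $a_0^{\,j}$ as $a_0^{\,i-1}\cdot a_0^{\,j-i+1}$ (resp.\ $a_m^{\,k}$ as $a_m^{\,t-1}\cdot a_m^{\,k-t+1}$), arranged so that the power of $a_0$ (resp.\ $a_m$) sitting immediately to the left of $a_i$ and $b_{j-i}$ is exactly the one annihilated by the inductive hypothesis; this is precisely the step where semi-commutativity, and not mere associativity, is essential, since $a_i$ lies between the annihilating power and $b_{j-i}$. It is also worth noting that $m$ and $n$ enter only as upper bounds on the degrees, so the argument never invokes $a_m\neq 0$ or $b_n\neq 0$.
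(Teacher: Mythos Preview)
Your proposal is correct and follows essentially the same approach as the paper: both prove $a_0^{\,j+1}b_j=0$ by induction on $j$, left-multiplying the degree-$j$ relation by $a_0^{\,j}$ and invoking semi-commutativity to kill the lower terms. The only cosmetic difference is in the $a_m^{\,n+1}$ half: the paper dispatches it in one line via the coefficient-reversal substitution $f^\star(x)=x^m f(x^{-1})$, $g^\star(x)=x^n g(x^{-1})$, whereas you rerun the mirrored induction explicitly from the top degree; these are the same argument in different packaging.
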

\begin{proof}
First notice that we may let $f^{\star}(x)=x^{m}f(x^{-1})$ and $g^{\star}(x)=x^{n}g(x^{-1})$, in which case we have merely reversed the coefficients of the polynomials $f(x)$ and $g(x)$.  Then, since $f(x)g(x)=0$, we have that $f^{\star}(x)g^{\star}(x)=0$.  By proving that $a_{0}^{n+1}g(x)=0$, we also prove that $a_{m}^{n+1}g^{\star}(x)=0$ by symmetry, and since $a_{m}^{n+1}g^{\star}(x)$ and $a_{m}^{n+1}g(x)$ are equivalent statements, hence it suffices to prove that $a_{0}^{n+1}g(x)=0$.  

Obviously the case of $i=0$ yields $a_{0}b_{0}=0$, and so we assume as our inductive hypothesis that $a_{i}b_{0}^{i+1}=0$ for all $i<j$.  Consider the coefficient on the $x^{j}$ term, namely $\sum\limits^{j}_{i=0}a_{i}b_{j-i}=0$.  Left multiplying by $a_{0}^{j}$ yields
\begin{equation}\label{eq1}
\sum\limits^{j}_{i=0}a_{0}^{j}a_{i}b_{j-1}=0
\end{equation}
Isolating the case of $i=0$ and distinguishing this term from the rest of the summation then yields
\begin{equation}\label{eq2}
a_{0}^{j+1}b_{j}+\sum\limits^{j}_{i=1}a_{0}^{j}a_{i}b_{j-i}=0
\end{equation}
But by semi-commutativity and our induction hypothesis, $\sum\limits^{j}_{i=1}a_{0}^{j}a_{i}b_{j-i}=0$, hence we obtain
\begin{equation}\label{eq3}
a_{0}^{j+1}b_{j}=0
\end{equation}
and by induction the proof is complete.
\end{proof}

\begin{lem}\label{lem2}
Right Duo rings are necessarily Right McCoy.
\end{lem}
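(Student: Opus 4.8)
The plan is to prove by induction on $\deg g$ the slightly strengthened assertion: if $f(x)g(x)=0$ with $f,g\in R[x]\setminus\{0\}$, then $f(x)$ has a nonzero right annihilator lying in the right ideal $I$ of $R$ generated by the coefficients of $g(x)$. Keeping track of $I$ is what makes the induction close up. Two elementary consequences of the Right Duo hypothesis are used throughout: (i) $R$ is semi-commutative (if $ab=0$ then, as $bR$ is two-sided, $rb\in bR$ and hence $arb\in abR=\{0\}$), so Lemma~\ref{lem1} applies; and (ii) $ab\in bR$ for all $a,b\in R$ (again because $bR$ is two-sided), so a coefficient of $f$ can be slid to the right past a coefficient of $g$ at the cost of replacing it by another element of $R$. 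In particular, every right annihilator of an element, hence of $f(x)$, is a two-sided ideal.

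Cancelling the largest power of $x$ dividing $f$ and dividing $g$ by the largest power of $x$ dividing it changes neither the hypothesis nor the conclusion, so I may assume $a_0,a_m,b_0,b_n$ are all nonzero; the base case $\deg g=0$ is immediate, since then $g=b_0$ itself serves as the annihilator. For the inductive step the goal is the noncommutative analogue of the commutative triviality ``$a_m\cdot f(x)g(x)=0$'': produce a nonzero $g'(x)$ with $\deg g'<\deg g$, with $f(x)g'(x)=0$, and with every coefficient of $g'$ lying in $I$; the inductive hypothesis applied to $f$ and $g'$ then finishes the proof.

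To build $g'$ I would split on whether $a_m g(x)=0$. If it is, then $a_mb_j=0$ for all $j$, so passing to $\tilde f(x):=f(x)-a_mx^m$ gives $\tilde f(x)g(x)=0$ with $\deg\tilde f<\deg f$; a secondary induction on $\deg f$ yields a nonzero $r=\sum_j b_jc_j\in I$ with $\tilde f(x)r=0$, and then $a_mr=\sum_j a_mb_jc_j=0$, so $f(x)r=0$ and we are done directly in this case. If instead $a_m g(x)\neq 0$, then by Lemma~\ref{lem1} there is a least $\ell\ge 2$ with $a_m^{\,\ell}g(x)=0$, and $h(x):=a_m^{\,\ell-1}g(x)$ is nonzero, has $\deg h<\deg g$ (since $a_mb_n=0$ forces its coefficient $a_m^{\,\ell-1}b_n$ of $x^n$ to vanish), has all coefficients in $I$ by (ii), and satisfies $a_m\cdot h(x)=0$.

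The main obstacle I anticipate is precisely here: $h(x)$ is annihilated on the left only by the constant $a_m$, whereas to take $g'=h$ and invoke the inductive hypothesis I need $f(x)h(x)=0$ (equivalently $f(x)\,(a_m g(x))=0$ in the original data). Over a commutative ring this is the one-line computation $a_m f(x)g(x)=0$; over a Right Duo ring one only obtains $f(x)a_m=a_m e(x)$ for some $e(x)\in R[x]$, so that $f(x)a_m g(x)=a_m e(x)g(x)$, which need not obviously vanish since $e(x)\neq f(x)$ in general. Closing this gap --- presumably by feeding the partial annihilator $r$ back through the relations coming from $f(x)g(x)=0$ and iterating down through $a_{m-1},a_{m-2},\dots$, using (ii) at each stage to keep the running element inside $I$ and the degree strictly decreasing --- is where the full strength of the Right Duo hypothesis (beyond mere semi-commutativity, which is known not to imply McCoy) must be spent.
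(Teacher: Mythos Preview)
Your proposal correctly sets up the inductive framework and the role of semi-commutativity, but the case $a_m g(x)\neq 0$ is genuinely open, as you admit: the polynomial $h(x)=a_m^{\ell-1}g(x)$ is a \emph{left} multiple of $g$, so $f(x)h(x)$ has no reason to vanish, and your suggested repair of ``iterating down through $a_{m-1},a_{m-2},\dots$'' is not how the argument closes. The missing idea is to replace $g$ by a \emph{right} multiple, so that $f\cdot g'=(fg)\,r=0$ is automatic. In the paper's notation (working with $a_0$ rather than $a_m$, which is symmetric): take $j$ minimal with $a_0b_j\neq 0$ and, by Lemma~\ref{lem1}, $k$ with $a_0^kb_j\neq 0=a_0^{k+1}b_j$. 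Now invoke Right Duo in exactly the form you recorded as (ii): since $b_jR$ is two-sided, $a_0^kb_j\in b_jR$, so $a_0^kb_j=b_jr$ for some $r\in R$. Put $g'(x):=g(x)r$. Then $f(x)g'(x)=0$ trivially; $g'\neq 0$ because its $j$-th coefficient is $b_jr=a_0^kb_j\neq 0$; its coefficients lie in the ideal generated by those of $g$; and $a_0$ now kills the first $j{+}1$ coefficients of $g'$ (indices $<j$ since $a_0b_i=0$ already, index $j$ since $a_0\cdot a_0^kb_j=0$). Iterating at most $n{+}1$ times forces $a_0g'(x)=0$, which is the other case.

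With this device in hand, the paper runs a single induction on $\deg f$: Case~2 above reduces to Case~1 ($a_0g=0$), and Case~1 strips $a_0$ from $f$ and recurses. Your outer induction on $\deg g$ is then superfluous; your ``secondary'' induction on $\deg f$ together with the right-multiplication trick is the whole proof.
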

\begin{proof}
Let \emph{R} be a Right Duo ring and let $f(x),g(x)\in\emph{R}[x]\setminus\{0\}$ with $f(x)g(x)=0$ where $f(x)=\sum\limits^{m}_{i=0}a_{i}x^{i}$ and $g(x)=\sum\limits^{n}_{j=0}b_{j}x^{j}$.  Moreover, let $I_{g(x)}$ denote the left ideal generated by the coefficients of $g(x)$.  The proof is by induction on the degree of the polynomial $f(x)$ and asserts that the nonzero ring element which acts as a right annihilator of $f(x)$ is contained in $I_{g(x)}$.

As our basis for induction, when $deg(f(x))=0$ we may take a minimal $j$ such that $b_{j}$ is nonzero and observe that $f(x)b_{j}=0$, hence $b_{j}$ is a nonzero ring element which annihilates $f(x)$ on the right and is contained in $I_{g(x)}$.

Now suppose our inductive hypothesis, that for $deg(f(x))<n$ there is a nonzero element of $I_{g(x)}$ which annihilates $f(x)$ on the right.  We consider the case where $deg(f(x))=n$.

Case $1$:  Suppose $a_{0}g(x)=0$.  Then $a_{0}I_{g(x)}=0$, so we may let $f^{\star}(x)=(f(x)-a_{0})/x$ and retain $f^{\star}(x)g(x)=0$.  But now we have that $deg(f(x))=n-1<n$, so by our inductive hypothesis there exists a nonzero element $r\in I_{g(x)}$ which annihilates $f^{\star}(x)$ on the right.  Considering the remaining term, $a_{0}r$, we may simply right multiply by $b_{0}$ to obtain $a_{0}rb_{0}$.  By semi-commutativity, $a_{0}b_{0}=0 \implies a_{0}rb_{0}=0$, so in any event we may take $rb_{0}\in I_{g(x)}$ as our right annihilator.

Case $2$:  Suppose $a_{0}g(x)\neq 0$.  Let $j$ be minimal such that $a_{0}b_{j}\neq 0$.  By Lemma \ref{lem1} there exists an integer $k>0$ such that $a_{0}^{k}b_{j}\neq 0=a_{0}^{k+1}b_{j}$.  By \emph{R} being Right Duo, there necessarily exists some $r\in\emph{R}$ satisfying $a_{0}^{k}b_{j}=b_{j}r$.  Hence we may let $g^{\star}(x)=g(x)r$ and observe that $f(x)g^{\star}(x)=0$ is maintained.  Moreover, $(0)\neq I_{g^{\star}(x)}\subset I_{g(x)}$, so we may replace $g(x)$ with $g^{\star}(x)$ without loss of generality.  Via this construction, $a_{0}$ now annihilates the first $j$ coefficients of $g^{\star}(x)$, therefore, after a finite number of repetitions of this process $a_{0}$ annihilates $g^{\star}(x)$ entirely, and we may revert back to Case $1$ to complete the proof.
\end{proof}

The analogous statement, that Left Duo rings are necessarily Left McCoy, the variant which we shall use in proving the Left Duo rings are necessarily McCoy, was proven in \cite{Nielsen} via the implication that one-sided Duo rings are semi-commutative which in turn are necessarily Left McCoy.  Moreover, by Lemma \ref{lem2} we know that Right Duo rings are Right McCoy, hence Right Duo rings are McCoy.  Thus it only needs to be proven, but particularly needs to be proven, that Left Duo rings either are or are not Right McCoy, since Left Duo rings are obviously Left McCoy.  Note that does not suffice to claim that Left Duo implies semi-commutative here, for while semi-commutative rings are always Left McCoy, they are \emph{not always Right McCoy}.

We are nearly ready to prove that one-sided Duo rings are necessarily McCoy.  For the sake of the proof, we introduce one last result from \cite{Camillo1} on degree considerations of polynomials in McCoy rings.

\begin{defin}\label{defin4}
A ring, \emph{R}, is said to be $(m,n)$-Right McCoy if $deg(f(x))\leq m$, $deg(g(x))\leq n$, and $f(x)g(x)=0$ together imply that there exist a non-zero $r\in \emph{R}$ such that $f(x)r=0$.
\end{defin}

\begin{thm}\label{thm1}
Left Duo rings are necessarily Right McCoy.
\end{thm}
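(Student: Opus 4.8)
The plan is to construct a non-zero right annihilator of $f(x)$ explicitly from the coefficients of $g(x)$, using a ``transposed'' version of the argument behind Lemma \ref{lem1}: that lemma produces powers of the outer coefficients of $f(x)$ that left-annihilate $g(x)$, and dually I would produce a power of an outer coefficient of $g(x)$ that right-annihilates $f(x)$. Writing $f(x)=\sum_{i=0}^{m}a_ix^i$ and $g(x)=\sum_{j=0}^{n}b_jx^j$, and recalling that a Left Duo ring is semi-commutative, I would compare coefficients in $f(x)g(x)=0$ from the top degree downward, right-multiplying at the $k$-th stage by $b_n^{k}$ (an induction on $k$): the base case is $a_mb_n=0$, and at stage $k$ semi-commutativity turns each earlier relation $a_{m-l}b_n^{l+1}=0$ (for $l<k$) into $a_{m-l}\,z\,b_n^{l+1}=0$, which kills every term of the degree-$(m+n-k)$ relation except $a_{m-k}b_n^{k+1}$. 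This yields $a_{m-k}b_n^{k+1}=0$ for $k=0,1,\dots,m$, hence $f(x)\,b_n^{m+1}=0$; applying the same to the reversed polynomials $f^{\star}(x)=x^{m}f(x^{-1})$ and $g^{\star}(x)=x^{n}g(x^{-1})$ gives $f(x)\,b_0^{m+1}=0$ as well. If $b_n^{m+1}\neq0$ or $b_0^{m+1}\neq0$ we are finished, with the explicit degree bookkeeping of Definition \ref{defin4}.

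So the real content is the degenerate case $b_n^{m+1}=b_0^{m+1}=0$ --- which is exactly the obstruction that keeps a general semi-commutative ring from being Right McCoy --- and it is here that the Left Duo hypothesis, which is strictly stronger than semi-commutativity and in particular makes $R$ Left McCoy and makes every one-sided annihilator a two-sided ideal, has to be used. I would first pass to a $g(x)$ of minimal degree among the non-zero right annihilators of $f(x)$ in $R[x]$ (this forces $b_0\neq0$, since otherwise $g(x)/x$ would be a shorter one), and then exploit that replacing $g(x)$ by a right multiple $g(x)c$ is ``free'' --- it preserves $f(x)g(x)=0$ and cannot enlarge the two-sided ideal generated by the coefficients of $g(x)$ --- in order to successively improve the leading coefficient: if $b_n^{m+1}=0$, minimality forces $g(x)\,b_n^{p-1}=0$ where $p$ is the nilpotency index of $b_n$, putting the non-zero element $b_n^{p-1}$ in the right annihilator of $g(x)$, and I would feed this back through the coefficient relations (together with $aR\subseteq Ra$ and the Left McCoy property of $R$) to reach a replacement polynomial for which the non-degenerate case above applies.

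The main obstacle is precisely this last step: the free manipulations alone do not escape a configuration in which every outer coefficient one can produce is nilpotent of small index, so the argument must genuinely invoke that $R$ is Left McCoy rather than merely semi-commutative --- which is the role the introduction emphasizes for the fact that Left Duo rings are Left McCoy. Once Theorem \ref{thm1} is proved, it combines with that same fact on the left and with Lemma \ref{lem2} on the right to give that one-sided Duo rings are (two-sided) McCoy, and the elements $b_n^{m+1}$, $b_0^{m+1}$ produced above are the promised explicit ring-element annihilators.
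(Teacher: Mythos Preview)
Your first paragraph is correct, but it is the content of the paper's Theorem~\ref{thm3} rather than of Theorem~\ref{thm1}: using only semi-commutativity (not yet the full Left Duo hypothesis) you obtain $f(x)\,b_n^{\,m+1}=0$ and, via the reversal $f^{\star},g^{\star}$, also $f(x)\,b_0^{\,m+1}=0$. This identifies what certain right annihilators look like, but says nothing about whether any of them is non-zero.

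The gap is exactly the degenerate case $b_n^{\,m+1}=b_0^{\,m+1}=0$, and your treatment of it is not a proof. Passing to a $g$ of minimal degree and deducing $g(x)\,b_n^{\,p-1}=0$ (with $p$ the nilpotency index of $b_n$) is valid, but it yields a non-zero right annihilator of $g$, not of $f$. The sentence ``feed this back through the coefficient relations together with $aR\subseteq Ra$ and the Left McCoy property to reach a replacement polynomial for which the non-degenerate case applies'' does not specify the replacement, and neither ingredient you invoke does the required work: the Left McCoy property only hands you a \emph{left} annihilator $s$ of $g$, and the inclusion $aR\subseteq Ra$ lets you slide a fixed element leftward but gives no control preventing the outer coefficient of any new $g$ from again being nilpotent of small index. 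You have correctly located the obstruction that separates semi-commutative from Right McCoy, but not removed it.

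The paper's proof is organised differently and never meets this obstruction. It does not split into non-degenerate and degenerate cases; instead it runs a double induction on the degree pair $(m,n)$ of Definition~\ref{defin4}, with base case Left Duo $\Rightarrow$ semi-commutative $\Rightarrow$ Right Linearly McCoy, i.e.\ $(1,1)$-Right McCoy. For the inductive step, from $a_0b_0=0$ and semi-commutativity one gets that the modified polynomials $g^{\star}=g(x)b_0$ and $f^{\star}=a_0f(x)$ satisfy $a_0g^{\star}(x)=0$ and $f^{\star}(x)b_0=0$; one may then strip the constant terms, divide by $x$, and drop to a smaller degree pair, with a separate direct argument when one of $f^{\star},g^{\star}$ happens to be the zero polynomial. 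No power $b_j^{\,m+1}$ is ever required to be non-zero, so nilpotency of the coefficients of $g$ never becomes an issue.
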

\begin{proof}
The proof is by induction on the degree considerations of \emph{R}.  Let \emph{R} be a Left Duo ring with $f(x),g(x)\in\emph{R}[x]\setminus\{0\}$ with $f(x)g(x)=0$.  Denote by $I_{g(x)}$ the left ideal generated by the coefficients of the polynomial $g(x)$.  Since \emph{R} is Left Duo, \emph{R} is semi-commutative and thus Linearly McCoy, and trivially also Right Linearly McCoy.  Hence we know that for \emph{R} being Let Duo implies that \emph{R} is $(1,1)$-Right McCoy, and our basis for induction is established.

Next, assume that if \emph{R} is Left Duo, it is also $(m,n)$-Right McCoy as our inductive hypothesis.  Two cases naturally arise, each with a pair of subcases.  First, let $f(x)g(x)=0$ with $deg(f(x))=m+1$ and $deg(g(x))=n$.  If it is the case that $a_{0}g(x)=0$, then we may simply let $f^{\star}(x)=(f(x)-a_{0})/x$ and observe that, by our inductive hypothesis, there exists an $r\in\emph{R}\setminus\{0\}$ such that $f^{\star}(x)r=0$, whence we obtain a non-zero ring element annihilator of $f(x)$ and that \emph{R} is $(m+1,n)$-Right McCoy.  Thus we may assume that $a_{0}g(x)\neq 0$.

In this case, choose an arbitrary index $j$ such that $a_{0}b_{j}\neq 0$.  Since it must be the case that $j>0$, we may then let $g^{\star}(x)=g(x)b_{0}$ and obtain $a_{0}b_{j}b_{0}$ which is equal to $b_{j}^{\star}a_{0}b_{0}$ with $0\neq b_{j}^{\star}\in I_{g(x)}$ by the Left Duo property on \emph{R} and by use of the left ideal $I_{g(x)}$.  We readily observe that for an arbitrary $j>0$ such that $a_{0}b_{j}\neq 0$, $a_{0}b_{j}b_{0}=0$, hence by $a_{0}b_{0}=0$ we have that $a_{0}g^{\star}(x)=0$ where $g^{\star}(x)=g(x)b_{0}$ as before.  Thus if \emph{R} is Left Duo and $(m,n)$-Right McCoy, it is necessarily $(m+1,n)$-Right McCoy.

Now, for the second case, let \emph{R} be Left Duo and $(m,n)$-McCoy as described in the first case.  This time we consider the degree of $g(x)$.  Let $deg(f(x))=m$ and $deg(g(x))=n+1$.  If $f(x)b_{0}=0$, then we may let $g^{\star}(x)=(g(x)-b_{0})/x$ and by our inductive hypothesis see that \emph{R} is indeed $(m,n+1)$-Right McCoy, hence we may assume that $f(x)b_{0}\neq 0$.

Similarly as before, we observe that $a_{0}b_{0}=0$.  Consider an arbitrary index $i$ such that $a_{i}b_{0}\neq 0$.  By setting $f^{\star}(x)=a_{0}f(x)$, we obtain the modified form $a_{0}a_{i}b_{0}$.  Since Left Duo implies semi-commutativity, and since $a_{0}b_{0}=0$, clearly $a_{0}a_{i}b_{0}=0$.  Hence for all indices $i$ such that $a_{i}b_{0}\neq 0$, left multiplication of $f(x)$ by $a_{0}$ allows for $b_{0}$ to annihilate the modified polynomial $f^{\star}(x)=a_{0}f(x)$, hence we may set $g^{\star}(x)=(g(x)-b_{0})/x$ and use our inductive hypothesis to find a non-zero ring element which serves as a right annihilator of $f(x)$.  Therefore we conclude that if \emph{R} is Left Duo and $(m,n)$-Right McCoy, it is necessarily $(m,n+1)$-Right McCoy.  We finally conclude that Left Duo implies $(m+1,n+1)$-Right McCoy and thereby prove that Left Duo rings are necessarily Right McCoy by induction on the degree considerations of \emph{R}.
\end{proof}

Alternatively, we offer a more direct route in the inductive step.
\begin{proof}
Since Left Duo implies Right Linearly McCoy, our basis for induction is complete.  Let \emph{R} be Left Duo and $(m,n)$-Right McCoy.  Let $f(x),g(x)\in\emph{R}[x]\setminus\{0\}$ with $deg(f(x))=m+1$, $deg(g(x))=n+1$, and $f(x)g(x)=0$.  Then we have that $a_{0}b_{0}=0$.  If $a_{0}g(x)=0$ and $f(x)b_{0}=0$ then we are finished by taking $f^{\star}(x)=(f(x)-a_{0})/x$ and $g^{\star}(x)=(g(x)-b_{0})/x$ and using the inductive hypothesis.  Hence we may assume that at least one of $a_{0}g(x)\neq 0$ or $f(x)b_{0}\neq 0$.  In fact, we  simply assume that both $a_{0}g(x)$ and $f(x)b_{0}$ are not equal to zero.

If $a_{0}g(x)\neq 0$ then for any index $j$ such that $a_{0}b_{j}\neq 0$, we set $g^{\star}(x)=g(x)b_{0}$ and notice that, by semi-commutativity, $a_{0}b_{j}b_{0}=0$, so clearly $a_{0}g^{\star}(x)=0$.  Likewise, we can set $f^{\star}(x)=a_{0}f(x)$ and see that $f^{\star}(x)b_{0}=0$.  Moreover, we retain that $f^{\star}(x)g^{\star}(x)=0$, hence it suffices to prove that neither $f^{\star}(x)$ nor $g^{\star}(x)$ are the zero polynomial.

Assume that $g^{\star}(x)=0$.  This implies that $b_{0}^{2}=0$.  Moreover, since $g^{\star}(x)=g(x)b_{0}=0$, for all non-zero coefficients of $g(x)$ we have that $b_{j}b_{0}=0$, whence all non-zero coefficients of $g(x)$ must be precisely $b_{0}$.  Thus $f(x)g(x)=0$ immediately yields $b_{0}$ as a non-zero ring element which is a right annihilator of $f(x)$, thereby rendering the construction of $g^{\star}(x)$ from $g(x)$ unnecessary.

Next assume that $f^{\star}(x)=0$.  This implies that $a_{0}^{2}=0$ and similarly as with the $g^{\star}(x)=0$ case, we obtain that all non-zero coefficients of $f(x)$ must be precisely $a_{0}$.  But if this is the case, then we have that $a_{0}g(x)=0$, again rendering our construction of $f^{\star}(x)$ from $f(x)$ unnecessary.

Therefore we conclude that if \emph{R} is Left Duo, \emph{R} is necessarily Right McCoy.
\end{proof}

We now state the main result of this section.

\begin{thm}\label{thm2}
One-sided Duo rings are necessarily McCoy.
\end{thm}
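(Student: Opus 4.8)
The plan is to assemble the theorem directly from the results already in hand, splitting on the two flavours of ``one-sided Duo''. Recall that a ring is McCoy precisely when it is both Left McCoy and Right McCoy, so for a ring $R$ that is either Left Duo or Right Duo it suffices to verify each side separately.

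First I would dispatch the Left McCoy half uniformly, since it does not care which side the Duo hypothesis sits on. If $R$ is Left Duo and $ab=0$, then $Ra$ is a two-sided ideal, so $ar=r'a$ for some $r'\in\emph{R}$, whence $arb=r'ab=0$; if $R$ is Right Duo the symmetric computation using $rb=br'$ gives the same conclusion. Thus $R$ is semi-commutative in either case, and since semi-commutative rings are always Left McCoy (the result of Nielsen invoked above), $R$ is Left McCoy.

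Next I would handle the Right McCoy half, and this is where the case split from Section~\ref{sec2} does all the work. If $R$ is Right Duo, then $R$ is Right McCoy by Lemma~\ref{lem2}. If $R$ is Left Duo, then $R$ is Right McCoy by Theorem~\ref{thm1} --- this is exactly the previously-missing implication that makes a two-sided conclusion available. Combining the two halves, in every case $R$ is both Left McCoy and Right McCoy, i.e.\ $R$ is McCoy, which is the claim.

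I do not expect a genuine obstacle: the substantive content has been front-loaded into Lemma~\ref{lem2} and Theorem~\ref{thm1}, and the assembly is bookkeeping. The one point deserving a word of care is that \emph{both} one-sided Duo conditions independently force semi-commutativity (hence Left McCoy), so that the Left McCoy side is never in question regardless of which Duo hypothesis we are handed; once that is noted, the proof is just a citation of the two Right McCoy results according to the side.
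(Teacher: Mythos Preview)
Your proposal is correct and follows essentially the same route as the paper: the paper's proof is the single line ``This is a direct consequence of Theorem~\ref{thm1},'' relying on the surrounding discussion that one-sided Duo implies semi-commutative (hence Left McCoy) and that Lemma~\ref{lem2} handles the Right Duo case, while Theorem~\ref{thm1} supplies Right McCoy for the Left Duo case. You have simply made that assembly explicit.
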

\begin{proof}
This is a direct consequence of Theorem \ref{thm1}.
\end{proof}

\section{Ring element annihilators}\label{sec3}
In this section we continue to work in the context of a Left Duo base ring, though the results hold for one-sided Duo base rings in general.  From Lemma \ref{lem1} we know that two ring elements $l\in\emph{R}$ which act as a left annihilator of $g(x)$ are precisely $a_{0}^{n+1}$ and $a_{m}^{n+1}$.  We now seek to describe some ring elements $r\in\emph{R}$ which act as a right annihilator of the polynomial $f(x)$.

\begin{thm}\label{thm3}
Let \emph{R} be a one-sided Duo ring with $f(x),g(x)\in\emph{R}[x]\setminus\{0\}$ where $f(x)=\sum\limits^{m}_{i=0}a_{i}x^{i}$ and $g(x)=\sum\limits^{n}_{j=0}b_{j}x^{j}$.  If $f(x)g(x)=0$, then $a_{0}^{n+1}$ and $a_{m}^{n+1}$ are left annihilators of $g(x)$ and $b_{0}^{m+1}$ and $b_{n}^{m+1}$ are right annihilators of $f(x)$.
\end{thm}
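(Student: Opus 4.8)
The plan is to derive both halves of the statement from Lemma~\ref{lem1}, exploiting the two symmetries available: reversal of a polynomial's coefficient list (already used inside the proof of Lemma~\ref{lem1}), and passage to the opposite ring.

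The assertion about left annihilators of $g(x)$ is essentially immediate. A one-sided Duo ring is semi-commutative, so Lemma~\ref{lem1}, applied verbatim to $f(x)g(x)=0$, already delivers $a_0^{n+1}g(x)=0$ and $a_m^{n+1}g(x)=0$; nothing beyond citing the lemma is needed here.

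For the right annihilators of $f(x)$ I would work in the opposite ring $R^{\mathrm{op}}$. Semi-commutativity is left--right symmetric --- if $ba=0$ then $bra=0$ for every $r$ by applying the insertion-of-factors property to the pair $(b,a)$ --- so $R^{\mathrm{op}}$ is again semi-commutative (equivalently, $R$ being one-sided Duo forces $R^{\mathrm{op}}$ to be one-sided Duo of the opposite handedness, hence semi-commutative). Moreover $f(x)g(x)=0$ in $R[x]$ translates into $g(x)\cdot_{\mathrm{op}}f(x)=0$ in $R^{\mathrm{op}}[x]$, since the coefficientwise sums $\sum_{i+l=k} b_i\cdot_{\mathrm{op}}a_l=\sum_{i+l=k}a_l b_i$ are precisely the coefficients of $fg$. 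Applying Lemma~\ref{lem1} inside $R^{\mathrm{op}}$ to this product, with $g$ in the role of the first factor (formal degree $n$) and $f$ in the role of the second factor (formal degree $m$), gives $b_0^{m+1}\cdot_{\mathrm{op}}f(x)=0$ and $b_n^{m+1}\cdot_{\mathrm{op}}f(x)=0$, because $b_0$ and $b_n$ are the constant and leading coefficients of $g$ and the exponent produced is $(\deg f)+1=m+1$. Reading these equalities back in $R$ via $b_0^{m+1}\cdot_{\mathrm{op}}a_i=a_i b_0^{m+1}$ (and similarly for $b_n$) yields exactly $f(x)b_0^{m+1}=0$ and $f(x)b_n^{m+1}=0$.

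I do not expect a serious obstacle; the points that need care are the bookkeeping --- the exponent attached to a coefficient of one polynomial is governed by the degree of the \emph{other} polynomial --- and checking that $R^{\mathrm{op}}$ simultaneously stays semi-commutative, reverses the product, preserves degrees, and interchanges ``left annihilator of $g$'' with ``right annihilator of $f$''. I also note that one could avoid the opposite ring altogether and instead mirror the induction in the proof of Lemma~\ref{lem1}, right-multiplying the coefficient relations $\sum_{i+l=k}a_ib_l=0$ by powers of $b_0$ and invoking the insertion-of-factors property to absorb the cross terms; the opposite-ring formulation simply obtains this for free rather than by recomputation. (One observes in passing that the Duo hypothesis is used here only through the semi-commutativity it provides.)
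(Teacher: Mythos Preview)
Your argument is correct and is genuinely different from the paper's. For the right-annihilator half, the paper reruns the induction of Lemma~\ref{lem1} on the other side: it right-multiplies the $x^{k}$-coefficient relation by $b_{0}^{k}$, then invokes the Left Duo property to rewrite each cross term as $a_{k-i}b_{i}=b_{i}^{\star}a_{k-i}$ so that the inductive hypothesis $a_{k-i}b_{0}^{k-i+1}=0$ applies; the $b_{n}^{m+1}$ case is then obtained by the usual coefficient-reversal. Your opposite-ring reduction bypasses this computation entirely and, more to the point, shows that the Duo hypothesis is used only through semi-commutativity: since semi-commutativity is self-opposite, Lemma~\ref{lem1} transfers to $R^{\mathrm{op}}$ verbatim and yields the right-annihilator statement for any semi-commutative $R$. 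That is a strict sharpening of what the paper proves. Your parenthetical about the mirrored induction is also on target: the cross term $a_{k-i}b_{i}b_{0}^{k}$ already vanishes by inserting $r=b_{i}b_{0}^{\,i-1}$ into $a_{k-i}\,r\,b_{0}^{k-i+1}=0$, so the Duo swap the paper performs is not actually needed even in the direct approach.
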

\begin{proof}
That $a_{0}^{n+1}$ and $a_{m}^{n+1}$ are left annihilators of $g(x)$ was proven in \cite{Camillo1} and \cite{Nielsen} and was restated above as Lemma \ref{lem1}, thus it suffices to prove that $b_{0}^{m+1}$ and $b_{n}^{m+1}$ are right annihilators of $f(x)$.

The proof is by induction and similar to that of Lemma \ref{lem1}.  Clearly $a_{0}b_{0}=0$ since $f(x)g(x)=0$, so our basis for induction is established.  Now assume that for $l<k$ we have $a_{l}b_{0}^{l+1}=0$.  Consider the coefficient of the $x^{k}$ term.  Since $f(x)g(x)=0$, we have that $\sum\limits^{k}_{i=0}a_{k-i}b_{i}=0$.  Right multiplication by $b_{0}^{k}$ then gives $\sum\limits^{k}_{i=0}a_{k-i}b_{i}b_{0}^{k}=0$ which is equivalent to $a_{k}b_{0}^{k+1}+\sum\limits^{k}_{i=1}a_{k-i}b_{i}b_{0}^{k}=0$.  From here we use the Left Duo property and note that each term $a_{k-i}b_{i}=b_{i}^{\star}a_{k-i}$ where $b_{i}^{\star}\in I_{g(x)}$.  Now, from our inductive hypothesis, we can reduce this to $a_{k}b_{0}^{k+1}=0$, whence $b_{0}^{k+1}$ is a right annihilator of $a_{k}$.  Therefore we conclude that $b_{0}^{m+1}$ is a right annihilator of $f(x)$ whenever $f(x)g(x)=0$ in the polynomial ring over a Left Duo base ring.

That $b_{n}^{m+1}$ is also a right annihilator of $f(x)$ follows by the same inversion process used in Lemma \ref{lem1}.
\end{proof}

What this result implies is that, when our base ring is one-sided Duo, the pitfalls which prevent semi-commutative rings from being McCoy are avoided.  That is, we can find ring element annihilators of the polynomials $f(x)$ and $g(x)$ directly from the coefficients of the polynomials themselves.  Furthermore, the corresponding proof can readily be verified in the context of a Right Duo base ring.

\section{Summary of results and their place in the literature}\label{sec4}
We conclude this paper by analyzing the place of these findings in the existing literature.  We proved that one-sided Duo rings are necessarily McCoy, a significant extension of our knowledge of the relationships between various ring theoretic structures.  But we were also able to describe some of the ring element annihilators of polynomials satisfying the McCoy property in the polynomial ring over a one-sided Duo base ring.  To illustrate how the first result fits into the existing literature, we now present an extended version of the diagram from \cite{Camillo1} which includes our results.  It is worth mentioning that this diagram can serve as an outstanding reference for future studies which offer results on extending this diagram or on the involved ring-theoretic structures.

\begin{center}
\begin{tikzpicture}
\matrix (m) [matrix of math nodes, row sep=1.6em, column sep=2.6em, minimum width=2em]
{
comm. & \textbf{Duo} & s.c. & 2-primal\\
symm. & rev. & \textbf{Left Duo} & Abelian & D. Finite\\
red. & Arm. & \textbf{McCoy} & \textbf{Right McCoy}\\
 & & lin. arm. & lin. McCoy & right lin. McCoy\\};
\path[-stealth]
(m-1-1) edge [double] (m-1-2)
(m-1-1) edge [double] (m-2-1)
(m-1-2) edge [double] (m-2-3)
(m-1-2) edge [double] (m-1-3)
(m-1-2) edge [double] (m-3-3)
(m-2-3) edge [double] (m-1-3)
(m-2-3) edge [double] (m-3-3)
(m-2-3) edge [double] (m-3-4)
(m-1-4) edge [double] (m-2-5)
(m-2-1) edge [double] (m-2-2)
(m-2-2) edge [double] (m-1-3)
(m-2-2) edge [double] (m-3-3)
(m-1-3) edge [double] (m-2-4)
(m-1-3) edge [double] (m-4-4)
(m-1-3) edge [double] (m-1-4)
(m-2-4) edge [double] (m-2-5)
(m-3-1) edge [double] (m-2-1)
(m-3-1) edge [double] (m-3-2)
(m-3-2) edge [double] (m-3-3)
(m-3-2) edge [double] (m-4-3)
(m-3-3) edge [double] (m-3-4)
(m-3-3) edge [double] (m-4-4)
(m-3-4) edge [double] (m-4-5)
(m-4-3) edge [double] (m-2-4)
(m-4-3) edge [double] (m-4-4)
(m-4-4) edge [double] (m-4-5)
(m-4-5) edge [double] (m-2-5);
\end{tikzpicture}
\end{center}

Note that, as mentioned in \cite{Camillo1}, if rings are explicitly without unity, then some of these conditions no longer hold in both diagrams.

\section*{Acknowledgments}
The author would like to specifically thank Michal Ziembowski for very thoughtful criticisms which proved crucial to the development of this work.

\bibliographystyle{plain}
\bibliography{duomccoybib}
\end{document}